\renewcommand{\PrintDOI}[1]{\href{http://dx.doi.org/\detokenize{#1}}{doi: \detokenize{#1}}%
	\IfEmptyBibField{pages}{, (to appear in print)}{}}
\theoremstyle{definition}
\newtheorem{theorem}{Theorem}[section]
\newtheorem{lemma}[theorem]{Lemma}
\theoremstyle{definition}
\newtheorem{definition}[theorem]{Definition}
\newtheorem{example}[theorem]{Example}
\theoremstyle{remark}
\numberwithin{equation}{section}
\numberwithin{equation}{section}
\title{Cocycle Invariants and Oriented Singular Knots}
\author{Jose Ceniceros}
\address{Hamilton College, NY }
\email{jcenicer@hamilton.edu}
\author{Indu R. Churchill}
\address{State University of New York at Oswego, Oswego, NY }
\email{indurasika.churchill@oswego.edu}
\author{Mohamed Elhamdadi}
\address{University of South Florida, Tampa, USA }
\email{emohamed@math.usf.edu}
\author{Mustafa Hajij}
\address{Santa Clara University,Santa Clara, USA}
\email{hajij@scu.edu}
\date{}
\subjclass[2020]{Primary 57K12, 05C38; Secondary 05A15}
\keywords{Quandles, singular Knots and Links}
\dedicatory{}
\begin{document}
\maketitle 
	
\begin{abstract}
We extend the quandle cocycle invariant to oriented singular knots and links using algebraic structures called \emph{oriented singquandles} and assigning weight functions at both regular and singular crossings.  This invariant coincides with the classical cocycle invariant for classical knots but provides extra information about singular knots and links.  The new invariant distinguishes the singular granny knot from the singular square knot. 
\end{abstract}


\section{Introduction}

Quandles are algebraic structures whose axioms model the three Reidemeister moves in knot theory.    Around 1982,  the notion of a quandle was introduced independently by Joyce \cite{Joyce} and Matveev \cite{Matveev} and they used it to construct representations of the braid groups.  Joyce and Matveev associated to each knot a quandle that determines the knot up to isotopy and mirror image.  Since then quandles and racks have been investigated by topologists  in order 
to construct knot and link invariants and their higher analogues (see for 
example \cite{EN} and references therein).  

Singular knot theory was introduced in 1990 by V. A. Vassiliev in \cite{V} as an extension of classical knot theory.  Since then many classical knot invariants have been extended to singular knots. Braid theory for singular knots was given by Birman in \cite{Birman}.  More precisely, Birman introduced singular braids and conjectured that the monoid of singular braids maps injectively into the group algebra of the braid group.  A proof of this conjecture was given by Paris in \cite{Paris}. Fiedler extended the Kauffman state models of the Jones and Alexander polynomials to the context of singular knots \cite{Fiedler}. 
In \cite{Gemein } Gemein investigated extensions of the Artin representation and the Burau representation to the singular braid monoid and the relations between them. Juyumaya and Lambropoulou constructed a Jones-type invariant for singular links using a Markov trace on a variation of the Hecke algebra \cite{JL}.  Recently, quandles have been used to study singular knots \cites{CEHN, BEHY, NOS}.

The authors of \cite{CJKLS} introduced a cohomology theory for quandles, defined state-sum invariants using quandle cocycles as weights and computed the invariants for some families of classical knots and knotted surfaces.  Since then quandle cocycle invariants were generalized, using quandle modules and non-abelian $2$-cocycles in \cite{CEGS}, extended to the biquandle case in \cite{CES}, and to virtual knots in \cite{CN}.  
In this article, We extend the quandle cocycle invariant to oriented singular knots and links using algebraic structures called \emph{oriented singquandles} and assigning weight functions at both classical and singular crossings.  This invariant coincides with the classical cocycle invariant for classical knots but provides extra information about singular knots and links.  The new invariant distinguishes the singular granny knot from the singular square knot.
 
 This article is organized as follows.  In Section~\ref{review}, we review the necessary ingredients of quandles and give examples. In
Sections~\ref{OSKQ} and~\ref{alg}, the diagrammatics of the generating set of singular Reidemeister moves are presented and are used to motivate the definition of singquandles.  In Section~\ref{axioms2Cocy}, the conditions on both the weights assigned at regular crossings and singular crossings are derived from the diagrammatics. 
The cocycle invariant is extended to singular knots in Section~\ref{inv} and used in Section~\ref{application} to distinguish singular knots and links with the same number of colorings.

\section{Review of Quandles}\label{review}
In this section we review the basics of quandles.  More details can be found for example in \cites{EN, Joyce, Matveev}.  A set $(X,\ast)$ is called a quandle if the following three identities are satisfied.
\begin{eqnarray*}
& &\mbox{\rm (1) \ }   \mbox{\rm  For all $x \in X$,
$x* x =x$.} \label{axiom1} \\
& & \mbox{\rm (2) \ }\mbox{\rm For all $y,z \in X$, there is a unique $x \in X$ such that 
$ x*y=z$.} \label{axiom2} \\
& &\mbox{\rm (3) \ }  
\mbox{\rm For all $x,y,z \in X$, we have
$ (x*y)*z=(x*z)*(y*z). $} \label{axiom3} 
\end{eqnarray*}

 A {\it quandle homomorphism} between two quandles $(X,*)$ and $(Y,\triangleright)$ is a map $f: X \rightarrow Y$ such that $f(x *y)=f(x) \triangleright f(y) $, where
 $*$ and $\triangleright$ 
 denote respectively the quandle operations of $X$ and $Y$.  If furthermore $f$ is a bijection then it is called a 
{\it quandle isomorphism} between $X$ and $Y$.  The set of quandle automorphisms of a quandle $X$ is a group denoted by Aut($X$).\\

\noindent The following are some typical examples.
\begin{itemize}
\item
Any non-empty set $X$ with the operation $x*y=x$ for all $x,y \in X$ is
a quandle called a  {\it trivial} quandle.
\item
Any group $X=G$ with conjugation $x*y=y^{-1} xy$ is a quandle.

\item
Let $G$ be an abelian group.
For elements  
$x,y \in G$, 
define
$x*y \equiv 2y-x$.
Then $\ast$ defines a quandle
structure on $G$ called \emph{Takasaki} quandle.  In case $G=\mathbb{Z}_n$ (integers mod $n$) the quandle is called {\it dihedral quandle}.
This quandle can be identified with  the
set of reflections of a regular $n$-gon
  with conjugation
as the quandle operation.
\item
Any $\Lambda = (\mathbb{Z }[T^{\pm 1}])$-module $M$
is a quandle with
$x*y=Tx+(1-T)y$, $x,y \in M$, called an {\it  Alexander  quandle}.

\item
A {\it generalized Alexander quandle} is defined  by 
a pair $(G, f)$ where 
$G$ is a  group and $f \in {\rm Aut}(G)$,
and the quandle operation is defined by 
$x*y=f(xy^{-1}) y $.  
\end{itemize}

The axioms of a quandle correspond respectively to the three Reidemeister moves of types I, II and III (see \cite{EN} for example).  In fact, one of the motivations of defining quandles came from knot diagrammatic.  Given a quandle $(X,*)$, axiom (2) states that {\it right multiplication} by $y \in X$, given by
by ${\mathcal{R}}_y(x) = x*y$ for $x \in X$ is a permutation of $X$.
The subgroup of ${\rm Aut}(X)$, generated by the permutations ${\mathcal{R}}_y$, $y \in X$, is 
called the {\it inner automorphism group} of $X$,  and is 
denoted by ${\rm Inn}(X)$. 
A quandle is {\it connected} if ${\rm Inn}(X)$ acts transitively on $X$.
 The operation $\bar{*}$ on $X$ defined by $x\ \bar{*}\ y= {\mathcal{R}}_y^{-1} (x) $
is a quandle operation.

\section{Oriented Singular Knots and Quandles}\label{OSKQ}

Similarly to quandles, singular quandles are algebraic structures that can be derived from the generating set of singular Reidemeister moves given in Figure \ref{Rmoves}. Generating sets of oriented singular Reidemeister moves were studied in \cite{BEHY}. The generating set obtained in \cite{BEHY}  is illustrated in Figure~\ref{generatingset}.

\begin{figure}[h] 
\tiny{
\centering
    \includegraphics[scale=0.6]{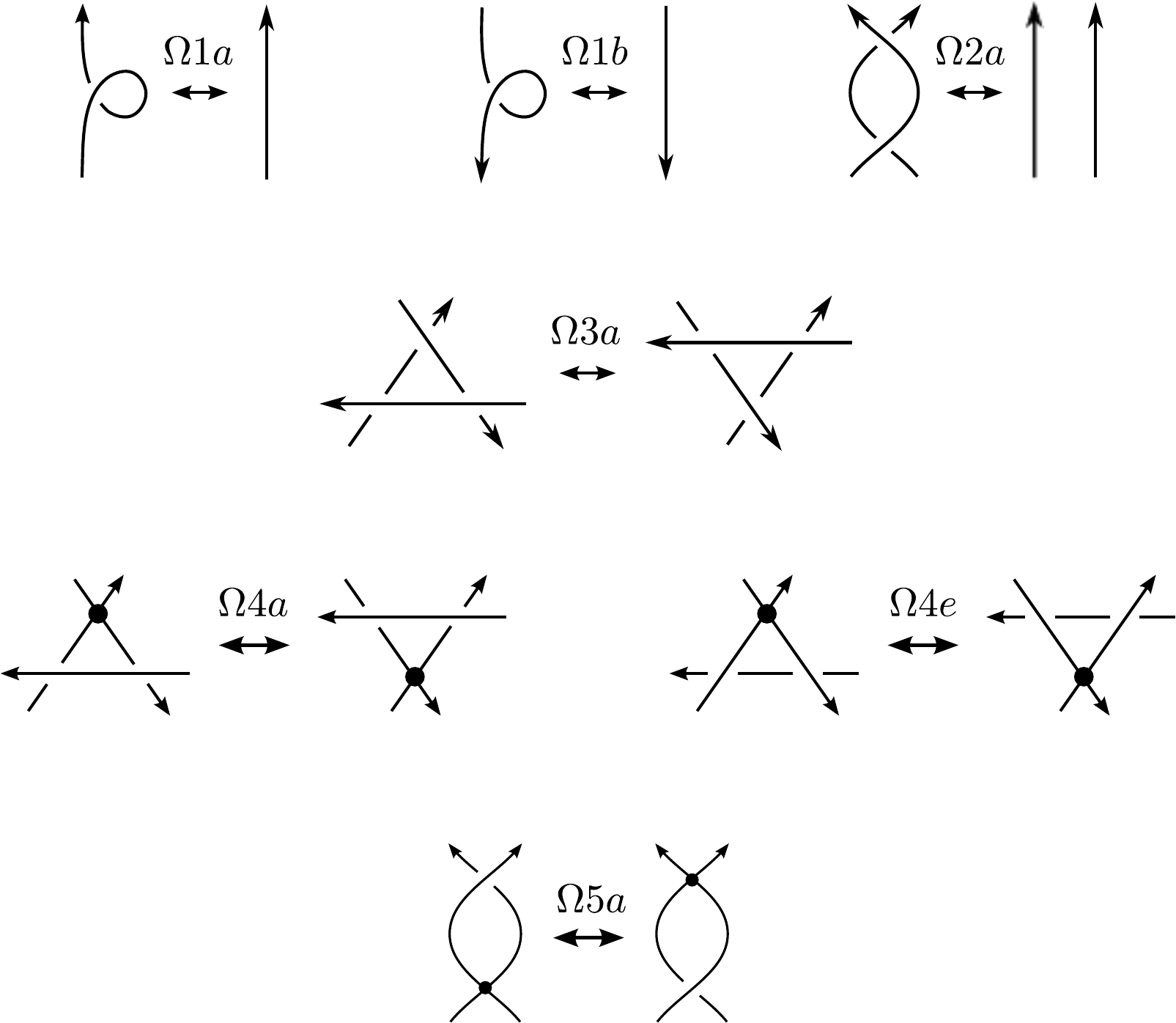}
    \caption{Generating set of singular Reidemeister moves}
    \label{generatingset}}
\end{figure}

Now that we have a generating set of singular Reidemeister moves, the axioms of singquandle can be derived easily.
The first step in this process is to define four binary functions that are indicated in Figure \ref{Rmoves}.

\begin{figure}[h]
	\tiny{
		\centering
		{\includegraphics[scale=0.55]{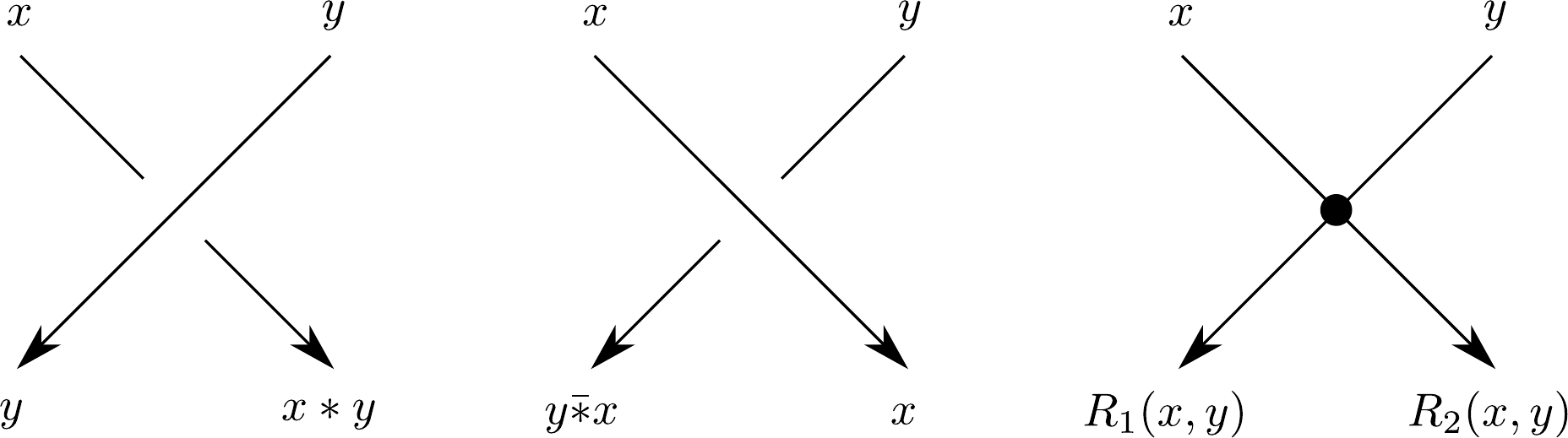}}
		\vspace{.2in}
		\caption{Colorings of classical and singular crossings}
		\label{Rmoves}}
\end{figure}

The axioms of singular quandles are then derived by considering the singular Reidemeister moves and writing the equations obtained using the above binary operations. See Figures   \ref{The generalized Reidemeister move RIV}, \ref{The generalized Reidemeister move RIVb} and \ref{The generalized Reidemeister move RV}.

\begin{figure}[h]
	\tiny{
		\centering
		{\includegraphics[scale=0.55]{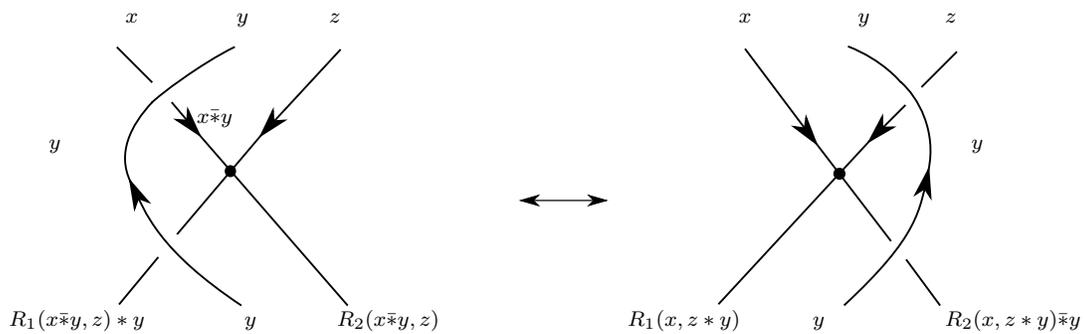}
			\put(-315,108){$x$}
			\put(-273,108){$y$}
			\put(-288,70){$x\bar{*}y$}
			\put(-344,60){$y$}
			\put(5,60){$y$}
			\put(-359,-7){$R_1(x\bar{*}y,z)*y$}
			\put(-270,-7){$y$}
			\put(-235,-7){$R_2(x\bar{*}y,z)$}
			\put(-125,-7){$R_1(x,z*y)$}
			\put(-55,-7){$y$}
			\put(-5,-7){$R_2(x,z*y)\bar{*}y$}
			\put(-238,108){$z$}
			\put(-83,108){$x$}
			\put(-38,108){$y$}
			\put(-5,108){$z$}
		}
		\vspace{.2in}
		\caption{The Reidemeister move $\Omega 4a$ and colorings} 
		\label{The generalized Reidemeister move RIV}}
\end{figure}

\begin{figure}[h]
	\tiny{
		\centering
		{\includegraphics[scale=0.55]{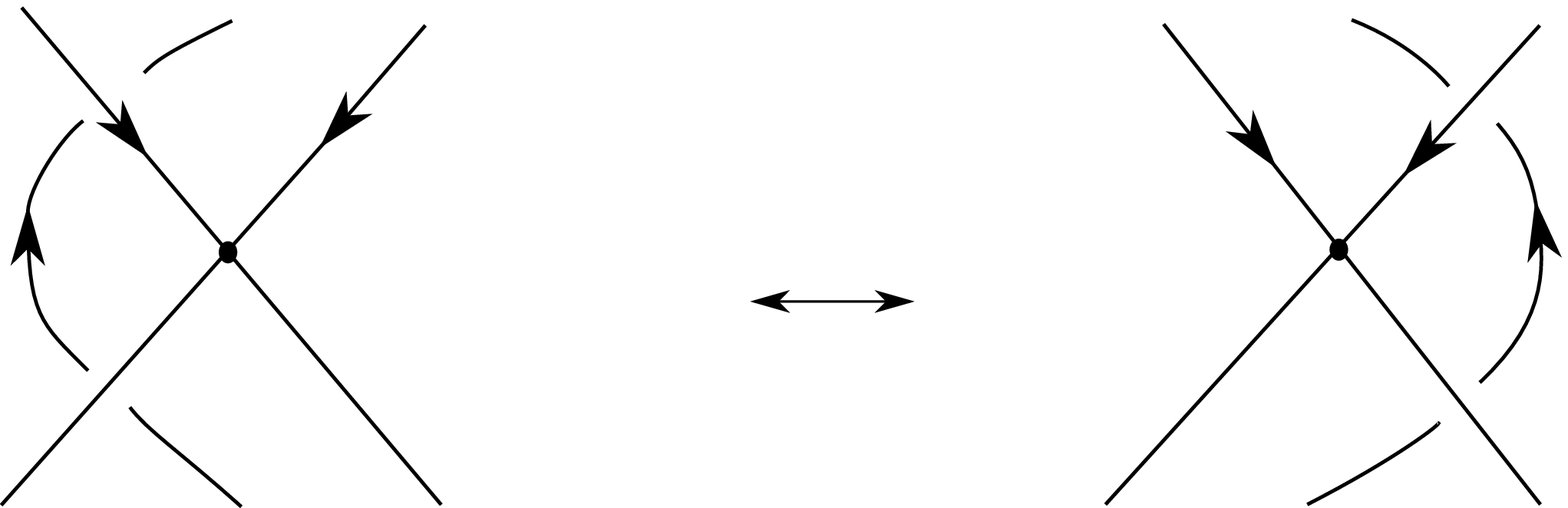}
			\put(-315,108){$x$}
			\put(-300,108){$(y\bar{*}R_1(x,z))*x$}
			\put(-288,70){$x$}
			\put(-364,60){$y\bar{*}R_1(x,z)$}
			\put(5,60){$y*R_2(x,z)$}
			\put(-359,-7){$R_1(x,z)$}
			\put(-280,-7){$y$}
			\put(-235,-7){$R_2(x,z)$}
			\put(-125,-7){$R_1(x,z)$}
			\put(-40,-7){$y$}
			\put(-5,-7){$R_2(x,z)$}
			\put(-228,108){$z$}
			\put(-83,108){$x$}
			\put(-68,108){$(y*R_2(x,z))\bar{*}z$}
			\put(5,108){$z$}
		}
		\vspace{.2in}
		\caption{The Reidemeister move $\Omega 4e$ and colorings }
		\label{The generalized Reidemeister move RIVb}}
\end{figure}

\begin{figure}[h]
\tiny{
  \centering
   {\includegraphics[scale=0.39]{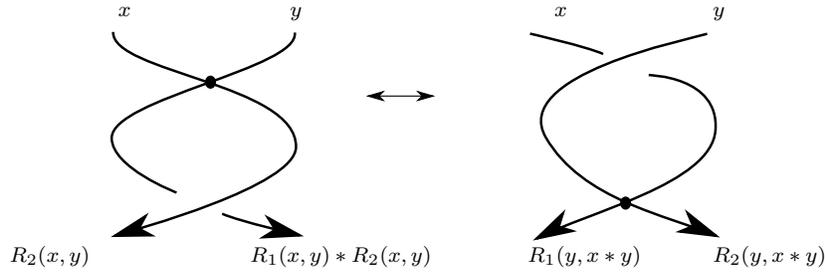}
	\put(-163,85){$y$}
    \put(-228,85){$x$}    
	\put(-3,-8){$R_2(y,x*y)$}
	 \put(-269,-8){$R_2(x,y)$}  
    \put(-63,85){$x$} 
    \put(-3,85){$y$}   
    \put(-73,-8){$R_1(y,x*y)$}
    \put(-178,-8){$R_1(x,y)*R_2(x,y)$}}
     \vspace{.2in}
     \caption{The Reidemeister move $\Omega 5a$ and colorings}
     \label{The generalized Reidemeister move RV}}
\end{figure}

\section{Algebraic Structures from Oriented Singular knots}\label{alg}

The previous three figures immediately gives us the following definition.

\begin{definition}\cite{BEHY} \label{oriented SingQdle}
	Let $(X, *)$ be a quandle.  Let $R_1$ and $R_2$ be two maps from $X \times X$ to $X$.  The triple $(X, *, R_1, R_2)$ is called an { \it oriented singquandle} if the following axioms are satisfied
	\begin{eqnarray}
		R_1(x\bar{*}y,z)*y&=&R_1(x,z*y) \;\;\;\text{coming from $\Omega$4a} \label{eq1}\\
		R_2(x\bar{*}y, z) & =&  R_2(x,z*y)\bar{*}y \;\;\;\text{coming from $\Omega$4a}\label{eq2}\\
	      (y\bar{*}R_1(x,z))*x   &=& (y*R_2(x,z))\bar{*}z \;\;\;\text{coming from $\Omega$4e } \label{eq3}\\
R_2(x,y)&=&R_1(y,x*y)  \;\;\;\text{coming from $\Omega$5a} \label{eq4}\\
R_1(x,y)*R_2(x,y)&=&R_2(y,x*y)  \;\;\;\text{coming from $\Omega$5a} \label{eq5}	
\end{eqnarray}
\end{definition}

The following lemma gives a family of examples of \emph{singquandles} over $\mathbb{Z}_n$.\\

\begin{lemma}
Let $n$ be a positive integer, let $a$ be an invertible element in $\mathbb{Z}_n$ and let $b,c \in \mathbb{Z}_n$, then the binary operations $x*y = ax+(1-a)y$, $ R_1(x,y) = bx + cy$ and $R_2(x,y)= acx + [b+ c(1 - a)]y $ make the triple $(\mathbb{Z}_n,*, R_1,R_2)$ into an oriented singquandle.
\end{lemma}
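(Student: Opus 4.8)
The plan is to verify, in order, that $(\mathbb{Z}_n,*)$ is a quandle and that the five identities \eqref{eq1}--\eqref{eq5} of Definition~\ref{oriented SingQdle} hold under the stated assignments. The operation $x*y = ax+(1-a)y$ is the Alexander quandle operation on $\mathbb{Z}_n$ with $T=a$, so because $a$ is invertible the three quandle axioms are automatic; in particular the uniqueness (left-division) axiom guarantees a well-defined dual operation. Solving $w*y = x$ for $w$ then gives the explicit form $x\,\bar{*}\,y = a^{-1}x + (1-a^{-1})y$. Recording this formula is the essential preliminary, since three of the five singquandle axioms mix $*$ with $\bar{*}$.

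First I would dispose of the two identities coming from $\Omega 5a$. Identity \eqref{eq4} is essentially the definition of $R_2$: expanding $R_1(y,x*y) = by + c\bigl(ax+(1-a)y\bigr) = acx + \bigl(b+c(1-a)\bigr)y$ reproduces exactly the prescribed $R_2(x,y)$, so \eqref{eq4} holds by construction. For \eqref{eq5} I would substitute the linear forms of $R_1$, $R_2$, and $*$ into both sides, expand, and compare the coefficients of $x$ and $y$; I expect both to match identically, so that \eqref{eq5} reduces to a true identity in $\mathbb{Z}_n$ with no constraint on $a,b,c$.

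The substantive work lies in \eqref{eq1}, \eqref{eq2}, and \eqref{eq3}, the axioms from $\Omega 4a$ and $\Omega 4e$, each of which involves $\bar{*}$. For each I would substitute the explicit forms (including $x\,\bar{*}\,y = a^{-1}x+(1-a^{-1})y$), expand everything into a $\mathbb{Z}_n$-linear combination of the free variables $x,y,z$, and match coefficients on the two sides. I expect the coefficients of $x$ and of $z$ to agree automatically, so that the entire content collapses onto the coefficient of the middle variable, which for each of the three axioms reduces to a single congruence relating $a,b,c$.

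The main obstacle is precisely this last point: carefully tracking the $a^{-1}$ terms introduced by $\bar{*}$ and identifying that common congruence. I anticipate that \eqref{eq1}, \eqref{eq2}, and \eqref{eq3} all collapse to the same relation, namely $(a-1)(b+c-1)\equiv 0$ in $\mathbb{Z}_n$. The crux of the proof is then to confirm that this relation is forced by the hypotheses; if it is not automatic for arbitrary $b,c$, the cleanest sufficient condition is $b+c\equiv 1$, under which all three axioms hold for every invertible $a$. Once this single relation is settled, the three remaining verifications are routine linear algebra over $\mathbb{Z}_n$.
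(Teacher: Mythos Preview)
Your approach is essentially the same as the paper's: compute $x\,\bar{*}\,y=a^{-1}x+(1-a^{-1})y$, use \eqref{eq4} to force the given form of $R_2$, and then reduce the remaining axioms to the single relation $(1-a)(1-b-c)=0$ in $\mathbb{Z}_n$. You are also right to flag that this relation is not automatic for arbitrary $b,c$; the paper's own proof quietly inserts exactly this extra hypothesis into its conclusion, so your caution about the lemma's literal statement is well placed.
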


\begin{proof}
First we have the inverse operation: $x\ \bar{*}\ y = a^{-1}x+(1-a^{-1})y$.  Now let $R_1(x,y) = bx+cy$, then by axiom~(\ref{eq4}) of Definition~\ref{oriented SingQdle}, we have $R_2(x,y) = acx + (c(1-a) + b)y$. Now using axiom~(\ref{eq2}) of Definition~\ref{oriented SingQdle} we obtain that $(1-a)(1 - b-c)=0 \in \mathbb{Z}_n$. Substituting, we find that the following is an oriented singquandle for any invertible $a$, any $b$ and $c$ in $\mathbb{Z}_n$ such that the condition $(1-a)(1 - b-c)=0$ holds in $\mathbb{Z}_n$.  We thus have:
	\begin{eqnarray*}
    	x*y &=& ax + (1-a)y \\
        R_1(x,y) &=& bx + cy \\
        R_2(x,y) &=& acx + [b+ c(1 - a)]y
    \end{eqnarray*}
\end{proof}

\section{Deriving the axioms of 2-Cocycles from singular knots}\label{axioms2Cocy}

Let $A$ be abelian group  and let $(X,*)$  be a quandle. A \textit{$2$-cocycle} of $X$ on $A$ is a function $\phi: X \times X \rightarrow A$ that satisfies some conditions coming from Reidemeister moves. For $x,y \in X $ we think of the value $\phi(x,y) $ as a weight associated to a positive crossing and the weight $-\phi(x,y)$ associated to a negative crossing in a knot diagram as shown in Figure \ref{weights}. The weight function $\phi $ must respect the following three conditions.  First, the function $\phi$ satisfies the so called \emph{$2$-cocycle condition} for all $x,y,z \in X$
\begin{eqnarray*}
\phi(x,y) + \phi (x*y,z)=\phi(x,z) + \phi(x*z,y*z). 
\end{eqnarray*}
The above condition is imposed by Reidemeister move III. Moreover, Reidemeister move I imposes the condition $\phi(x,x)=0$ on a $2$-cocycle. Finally, Reidemeister move II imposes that the total Boltzmann weight should be zero; this condition is automatically satisfied due to the weight at a positive and negative crossing canceling each other (see Figure ~\ref{weights}). 
For $x,y \in X $ the values of the function $\phi(x,y)$ on a given crossing are usually called the \emph{Boltzmann weights}. 

Now, we will extend this definition to singular knots and links. To this end, we define a function $\phi^{\prime} :X \times X \to A $ that represents the Boltzmann weight at a singular crossing. This is shown in the right figure in Figure \ref{weights}. 

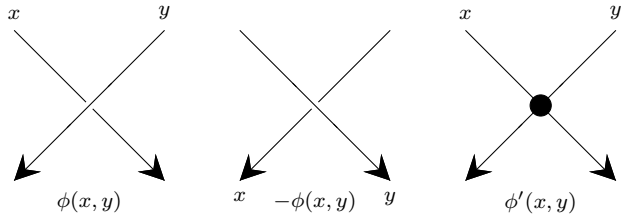
\begin{figure}[h]
\begin{tikzpicture}[use Hobby shortcut]
\begin{knot}[
consider self intersections=true,
  ignore endpoint intersections=false,
  flip crossing/.list={3,4,5,6,11,13}
]
\strand[decoration={markings,mark=at position 1 with
    {\arrow[scale=3,>=stealth]{>}}},postaction={decorate}] (1,1) ..(-1,-1); 
\strand[decoration={markings,mark=at position 1 with
    {\arrow[scale=3,>=stealth]{>}}},postaction={decorate}] (-1,1) ..(1,-1);
    
\strand[decoration={markings,mark=at position 1 with
    {\arrow[scale=3,>=stealth]{>}}},postaction={decorate}] (2,1) ..(4,-1);
\strand[decoration={markings,mark=at position 1 with
    {\arrow[scale=3,>=stealth]{>}}},postaction={decorate}] (4,1) ..(2,-1);  
    
\strand[decoration={markings,mark=at position 1 with
    {\arrow[scale=3,>=stealth]{>}}},postaction={decorate}] (5,1) ..(7,-1);
\strand[decoration={markings,mark=at position 1 with
    {\arrow[scale=3,>=stealth]{>}}},postaction={decorate}] (7,1) ..(5,-1);      
\end{knot}
\node[above] at (-1,1) {\tiny $x$};
\node[above] at (1,1) {\tiny $y$};
\node[below] at (0,-1) {\tiny $\phi(x,y)$};

\node[below] at (2,-1) {\tiny $x$};
\node[below] at (4,-1) {\tiny $y$};
\node[below] at (3,-1) {\tiny $-\phi(x,y)$};

\node[circle,draw=black, fill=black, inner sep=0pt,minimum size=8pt] (a) at (6,0) {};
\node[above] at (5,1) {\tiny $x$};
\node[above] at (7,1) {\tiny $y$};
\node[below] at (6,-1) {\tiny $\phi'(x,y)$};
\end{tikzpicture}
\vspace{.2in}
		\caption{Weight functions at classical and singular crossings.}
		\label{weights}
\end{figure}

\begin{figure}[h]
	\tiny{
		\centering
		{\includegraphics[scale=0.55]{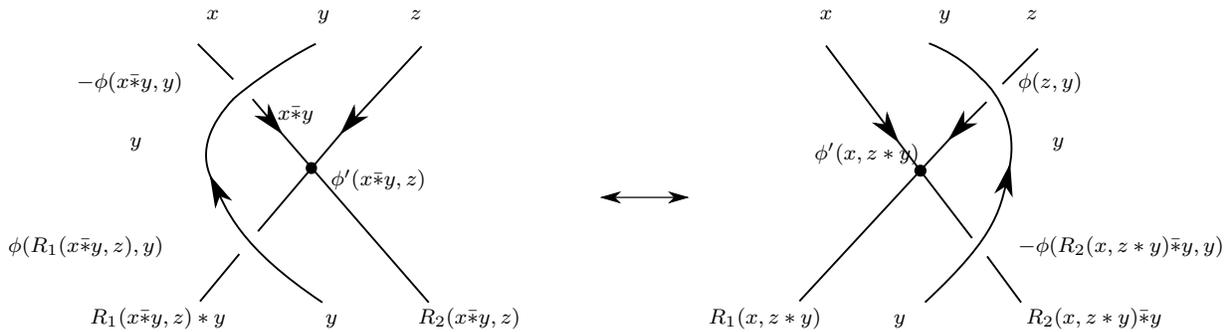}
			\put(-315,108){$x$}
			\put(-273,108){$y$}
			\put(-288,70){$x\bar{*}y$}
			\put(-344,60){$y$}
			\put(5,60){$y$}
			\put(-359,-7){$R_1(x\bar{*}y,z)*y$}
			\put(-270,-7){$y$}
			\put(-235,-7){$R_2(x\bar{*}y,z)$}
			\put(-125,-7){$R_1(x,z*y)$}
			\put(-55,-7){$y$}
			\put(-5,-7){$R_2(x,z*y)\bar{*}y$}
			\put(-238,108){$z$}
			\put(-83,108){$x$}
			\put(-38,108){$y$}
			\put(-5,108){$z$}
            \put(-364,82){$-\phi(x\bar{*}y,y)$}
            \put(-8,82){$\phi(z,y)$}
           \put(-85,55){$\phi^{\prime}(x,z*y)$}
             \put(-268,45){$\phi^{\prime}(x\bar{*}y,z)$}
            \put(-8,20){$-\phi(R_2(x,z*y)\bar{*}y,y)$}
            \put(-390,20){$\phi(R_1(x\bar{*}y,z),y)$}
		}
		\vspace{.2in}
		\caption{The Reidemeister move $\Omega 4a$ and colorings} 
		\label{W4a}}
\end{figure}
Naturally, we now want these two function to satisfy the conditions under singular Reidemeister moves. Using Figure \ref{W4a} we deduce the following condition:
\begin{equation}\label{troisieme}
-\phi(x\bar{*}y,y)+\phi^{\prime}(x\bar{*}y,z)+\phi(R_1(x\bar{*}y,z),y)=\phi(z,y)+\phi^{\prime}(x,z*y)-\phi(R_2(x,z*y)\bar{*}y,y).
\end{equation}

\begin{figure}[h]
	\tiny{
		\centering
		{\includegraphics[scale=0.55]{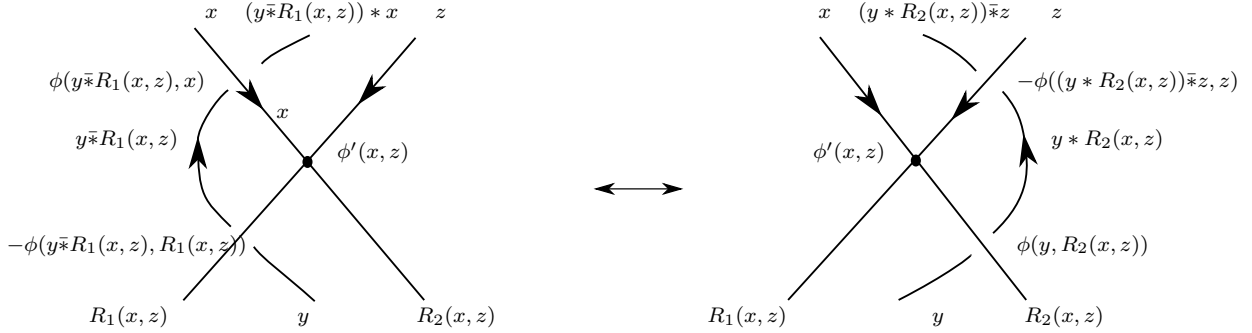}
			\put(-315,108){$x$}
			\put(-300,108){$(y\bar{*}R_1(x,z))*x$}
			\put(-288,70){$x$}
            \put(-374,82){$\phi( y\bar{*}R_1(x,z),x)$}
            \put(-8,82){$-\phi((y*R_2(x,z))\bar{*}z, z)$}
            \put(-85,55){$\phi^{\prime}(x,z)$}
             \put(-265,55){$\phi^{\prime}(x,z)$}
            \put(-8,20){$\phi(y,R_2(x,z))$}
            \put(-390,20){$-\phi(y\bar{*}R_1(x,z),R_1(x,z))$}
			\put(-364,60){$y\bar{*}R_1(x,z)$}
			\put(5,60){$y*R_2(x,z)$}
			\put(-359,-7){$R_1(x,z)$}
			\put(-280,-7){$y$}
			\put(-235,-7){$R_2(x,z)$}
			\put(-125,-7){$R_1(x,z)$}
			\put(-40,-7){$y$}
			\put(-5,-7){$R_2(x,z)$}
			\put(-228,108){$z$}
			\put(-83,108){$x$}
			\put(-68,108){$(y*R_2(x,z))\bar{*}z$}
			\put(5,108){$z$}
		}
		\vspace{.2in}
		\caption{The Reidemeister move $\Omega 4e$ and colorings }
		\label{W4b}}
\end{figure}
Figure \ref{W4b} implies the following equation:
\begin{eqnarray}\label{RR12}
\phi(y\bar{*}R_1(x,z),x)-\phi(y\bar{*}R_1(x,z),R_1(x,z))=
-\phi((y*R_2(x,z))\bar{*}z, z)+\phi(y,R_2(x,z)).
\end{eqnarray}
Notice that in this equation we got rid of the term $\phi'(x,z)$ which appeared on both sides of the equation before simplification.

\begin{figure}[h]
\tiny{
  \centering
   {\includegraphics[scale=0.39]
   {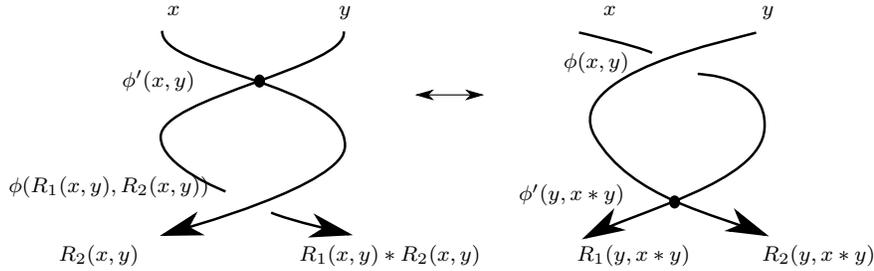}
	\put(-163,85){$y$}
    \put(-228,85){$x$}
    \put(-245,58){$\phi^{\prime}(x,y)$}
    \put(-288,18){$\phi(R_1(x,y),R_2(x,y))$}
    \put(-78,65){$\phi(x,y)$}
    \put(-95,15){$\phi^{\prime}(y,x*y)$}
	\put(-3,-8){$R_2(y,x*y)$}
	 \put(-269,-8){$R_2(x,y)$}  
    \put(-63,85){$x$} 
    \put(-3,85){$y$}   
    \put(-73,-8){$R_1(y,x*y)$}
    \put(-178,-8){$R_1(x,y)*R_2(x,y)$}}
     \vspace{.2in}
     \caption{The Reidemeister move $\Omega 5a$ and colorings}
     \label{W5}}
\end{figure}
Figure \ref{W5} implies that we have the following condition:
\begin{equation}\label{deuxiemeequation}
\phi^{\prime}(x,y)+\phi(R_1(x,y),R_2(x,y))=\phi(x,y)+\phi^{\prime}(y,x*y).
\end{equation}

\section{Cocycle Invariants of Singular knots and links}\label{inv}
In this section we extend the definition of the 2-cocycle invariant to singular knots and prove that it is indeed an invariant for singular knots. First, recall that given an abelian group $A$ and a quandle $(X,*)$, a $2$-cocycle is a function $\phi: X \times X \rightarrow A$ that satisfies the following for all $x,y,z \in X$
\begin{eqnarray}\label{standard}
\phi(x,y) + \phi (x*y,z)&=&\phi(x,z) + \phi(x*z,y*z) \text{ and } \phi(x,x)=0.
\end{eqnarray}
In order to construct our invariant we need to solve the system made of  equations~(\ref{troisieme}),\\ ~(\ref{RR12}), ~(\ref{deuxiemeequation}) and~(\ref{standard}).  First we need to recall the notion of coloring of a knot diagram by an oriented singquandle. Let $D$ be a diagram of a singular knot and let $(X, *, R_1, R_2)$ be an oriented singquandle, then a coloring of $D$ by $(X, *, R_1, R_2)$ is defined in a similar way to the case of colorings of classical knots by quandles.  To be precise, the colorings at positive, negative and singular crossings are given by Figure~\ref{Rmoves}.
As in the case of classical knots we will assign the weights $\phi(x,y)$ and $-\phi(x,y)$ at a positive and negative crossing respectively as shown Figure~\ref{weights}. Furthermore, we associate the weight $\phi'(x,y)$ at a singular crossing as shown in Figure~\ref{weights}.  Now, given an abelian group $A$ (denoted multiplicatively) and cocycles $\phi, \phi': X \times X \rightarrow A$, we define the state sum in exactly the same manner as state sum for classical knots (see \cite{CJKLS}
page 3953).
\begin{definition}\label{StSum}
Let $(X, *, R_1, R_2)$ be an oriented singquandle and let $A$ be an abelian group.  Assume that $\phi, \phi': X \times X \rightarrow A$ satisfy the  equations~(\ref{troisieme}),~(\ref{RR12}), ~(\ref{deuxiemeequation}) and~(\ref{standard}).  Then the state sum for a diagram of a singular knot $K$ is given by $\displaystyle \Phi_{\phi, \phi'}(K)=\sum_{\mathcal{C}}\prod_{\tau} \psi(x,y) $, where the product is taken over all classical and singular crossings $\tau$ of the digaram of $K$, the sum is taken over all possible colorings $\mathcal{C}$ of the knot $K$.   
\end{definition}
Notice that in this definition $\psi(x,y)= 
    \phi(x,y)^{\pm 1}$ at classical positive or negative crossing and $\psi(x,y)= \phi'(x,y)$ at a singular crossing as in Figure~\ref{weights}.
  Now we have the following theorem stating that $\Phi_{\phi, \phi'}(K)$ is an invariant of singular knots.   
\begin{theorem}
Let $\phi, \phi': X \times X \rightarrow A$ be maps satisfying the conditions of Definition~\ref{StSum}.  The \emph{state sum} associated with $\phi$ and $\phi'$ is invariant under the moves listed in the \emph{generating set} of singular Reidemester moves in Figure~\ref{generatingset}, so it defines an invariant of singular knots and links.
\end{theorem}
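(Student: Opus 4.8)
The plan is to prove invariance one generator at a time, running over the generating set of singular Reidemeister moves in Figure~\ref{generatingset}, which comprises the classical moves together with the three singular moves $\Omega 4a$, $\Omega 4e$ and $\Omega 5a$. For each generator I would carry out two steps. First, establish a bijection between the colorings of the diagram before the move and the colorings after it, matching colorings that agree on the boundary of the local disk in which the move is performed. Second, show that under this bijection the product of Boltzmann weights contributed by the crossings inside the disk is unchanged. Since both the colorings and the weights outside the disk are left untouched, these two facts together force $\sum_{\mathcal{C}}\prod_{\tau}\psi(x,y)$ to agree before and after, which is exactly invariance of $\Phi_{\phi,\phi'}$.

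The coloring bijection is precisely what the oriented singquandle axioms of Definition~\ref{oriented SingQdle} were designed to supply. For the classical moves, invertibility of right multiplication (quandle axiom~(2)) furnishes the bijection just as in the classical theory. For the singular moves, reading the labels off Figures~\ref{The generalized Reidemeister move RIV},~\ref{The generalized Reidemeister move RIVb} and~\ref{The generalized Reidemeister move RV}, the two candidate boundary colorings on either side of the move coincide by equations~(\ref{eq1})--(\ref{eq2}) for $\Omega 4a$, by~(\ref{eq3}) for $\Omega 4e$, and by~(\ref{eq4})--(\ref{eq5}) for $\Omega 5a$. Hence every coloring on one side extends to a unique coloring on the other side with identical boundary data, giving the required bijection.

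It then remains to verify the weight equalities. For the classical moves this is exactly the content of conditions~(\ref{standard}): the type~I move contributes a trivial weight via $\phi(x,x)=0$, the type~II move contributes a positive and a negative crossing whose weights $\phi(x,y)$ and $\phi(x,y)^{-1}$ cancel, and the type~III move is the $2$-cocycle condition, all precisely as in \cite{CJKLS}. For the three singular moves, the Boltzmann weights have already been placed on the diagrams in Figures~\ref{W4a},~\ref{W4b} and~\ref{W5}; equating the product of the weights on the left of each figure with the product on the right yields exactly equations~(\ref{troisieme}),~(\ref{RR12}) and~(\ref{deuxiemeequation}), which hold by the hypotheses of Definition~\ref{StSum}. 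Note that in the $\Omega 4e$ computation the singular weight $\phi'(x,z)$ occurs on both sides and cancels, leaving precisely~(\ref{RR12}). This closes the local check for every generator.

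The main obstacle is not any individual calculation -- indeed the equations were reverse-engineered from the figures so that each weight equality is true \emph{by fiat} -- but rather the bookkeeping that makes this legitimate. One must rely on the fact, established in \cite{BEHY}, that Figure~\ref{generatingset} really is a generating set of oriented singular Reidemeister moves, so that any two diagrams of the same singular knot differ by a finite sequence of these generators; and one must confirm that the orientation and over/under conventions fixed in Figures~\ref{W4a}--\ref{W5} exhaust the cases that arise, up to the symmetries already subsumed in the generating set. Granting these, invariance under arbitrary singular isotopy follows because $\Phi_{\phi,\phi'}$ is preserved at each step of such a sequence, and the theorem is proved.
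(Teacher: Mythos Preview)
Your proposal is correct and follows exactly the same approach as the paper's own proof, only with considerably more detail: the paper simply observes that the singquandle axioms give the one-to-one correspondence between colorings before and after each generating move, and that invariance of the state sum then follows automatically from equations~(\ref{troisieme}),~(\ref{RR12}),~(\ref{deuxiemeequation}) and~(\ref{standard}). Your write-up unpacks precisely these two steps move by move, so there is nothing substantively different.
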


\begin{proof}
As in classical knot theory, there is one-to-one correspondence between colorings before and after each of the \emph{generating set} of singular Reidemester moves.  The invariance follows automatically from  equations~(\ref{troisieme}),~(\ref{RR12}), ~(\ref{deuxiemeequation}) and~(\ref{standard}) involving $\phi$ and $\phi'$. 
\end{proof}

\section{Distinguishing singular knots and links using the cocycle invariant}\label{application}

In this section, we first provide examples of singquandles and their $2$-cocycles. We then use the cocycle invariant introduced in Definition~\ref{StSum} to distinguish pairs of singular knots and links. Notice that since the target groups of the maps $\phi$ and $\phi'$ will be finite cyclic groups $A=<u, u^n=1>$, then the quandle cocycle invariants will have the form $\displaystyle \Phi_{\phi, \phi'}(K)=\sum_{i=0}^{n-1}a_iu^i$.

The following examples were computed independently using \textit{Mathematica} and \textit{Maple}.  They were also checked by hand computations.\\

\begin{example}\label{ex1cocycles}
Now we give a list of four singquandles and their $2$-cocycles.  Precisely, in each of the following four items $(X,*,R_1,R_2)$ is a singquandle and the maps $\phi$ and $\phi'$ satisfy the equations~(\ref{troisieme}),~(\ref{RR12}), ~(\ref{deuxiemeequation}) and~(\ref{standard}), thus allowing for the computation of the cocycle invariants. 
\begin{enumerate}
    \item 
    Let $X$ be the singquandle $\mathbb{Z}_6$ with the operations $*$ and $\bar{*}$ and the functions $R_1$ and $R_2$ given by, $\forall x,y \in X$, $x *y = -x+2y = x \bar{*}y$ and $R_1(x,y) =3+2x-y$ and $R_2(x,y) = 3+x$. Let the coefficient group be $A=\mathbb{Z}_6$ and let the classical $2$-cocycle be defined by 
    $\phi(x,y) = 2x+3x^2 -2y-xy -2y^2$ and the weight at singular crossings be defined by $\phi'(x,y) = 3+x+x^2+2y-xy$.   
    
    \item
    Let $X$ be the singquandle $\mathbb{Z}_6$ with the operations $*$ and $\bar{*}$ and the functions $R_1$ and $R_2$ given by, $\forall x,y \in X$, $x *y = -x +2y = x 
\bar{*}y$ and $R_1(x,y) =3+x$ and $R_2(x,y) = 3+3x+3x^2+y$. Let the coefficient group be $A=\mathbb{Z}_2$ and let the classical $2$-cocycle be 
defined by $\phi(x,y) = y(x+1)$ and the weight at singular crossings be defined by $\phi'(x,y) = 1+x+xy$.

\item
Let $X$ be the singquandle $\mathbb{Z}_{10}$ with the operations $*$ and $\bar{*}$ and the functions $R_1$ and $R_2$ given by, $\forall x,y \in X$, $x *y = 3x -2y$, $x 
\bar{*}y = -3x+4y$ with $R_1(x,y) =x$ and $R_2(x,y) = 5x+5x^2+y$. Let the coefficient group be $A=\mathbb{Z}_4$ and let the classical $2$-cocycle be 
define by $\phi(x,y) = (x-y)(y-x)$ and the weight at singular crossings be defined by $\phi'(x,y) = 1+2x+3x^2+y^2$.


\item
Let $X$ be the singquandle $\mathbb{Z}_{10}$ with the operations $*$ and $\bar{*}$ and the functions $R_1$ and $R_2$ given by, $\forall x,y \in X$, $x*y = 3x -2y$, $x 
\bar{*}y = -3x+4y$ with $R_1(x,y) =3x+8y$ and $R_2(x,y) = 4x+7y$. Let the coefficient group be $A=\mathbb{Z}_4$ and let the classical $2$-cocycle be 
define by $\phi(x,y) = 2(2+3x+3y)^3$, and the weight at singular crossings be defined by $\phi'(x,y) = (1+x+y)^5$.
\end{enumerate}
\end{example}

Now we give examples of pairs of singular knots and links and we distinguish them using the list of singquandles and their $2$-cocycles given in Example~\ref{ex1cocycles}.

\begin{example}
In this example we show that the cocycle invariant distinguishes two singular knots with 5 classical crossings and a singular crossing. Consider the oriented singquandle $(\mathbb{Z}_{6}, *, R_1, R_2)$ with cocycles $\phi(x,y) = 2x+3x^2 -2y-xy -2y^2$ and $\phi'(x,y) = 3+x+x^2+2y-xy$ from item (1) of Example \ref{ex1cocycles}. We will use the $2$-cocycle invariant to distinguish the following singular knots listed as $5^k_1$ and $5^k_8$ in \cite{Oyamaguchi}. 
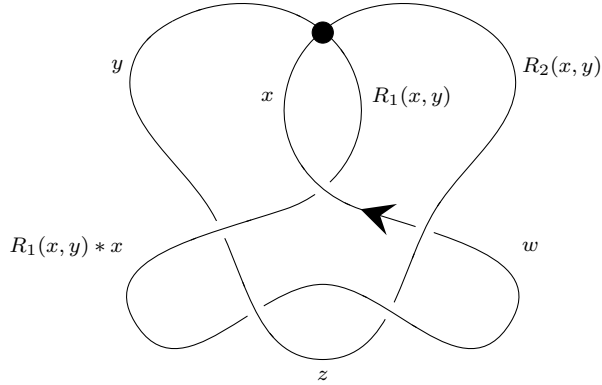
\begin{figure}[h]
\begin{tikzpicture}[use Hobby shortcut]
\begin{knot}[thick,
consider self intersections=true,
  ignore endpoint intersections=false,
  flip crossing/.list={2,4,6,8}
]
\strand ([closed]-2,2)[decoration={markings,mark=at position .5 with
    {\arrow[scale=3,>=stealth]{>}}},postaction={decorate}]..(.5,1)..(-.5,-.5)..(-2.5,-2)..(0,-1.5)..(2.5,-2)..(.5,-.5)..(-.5,1)..(2,2)..(2.5,1.5)..(1.5,-.5)..(0,-2.5)..(-1.5,-.5)..(-2.5,1.5);
\end{knot}

\node[circle,draw=black, fill=black, inner sep=0pt,minimum size=8pt] (a) at (0,1.85) {};
\node[right] at (.5,1) {\tiny $R_1(x,y)$};
\node[right] at (2.5,1.4) {\tiny $R_2(x,y)$};
\node[left] at (-.5,1) {\tiny $x$};
\node[left] at (-2.5,1.4) {\tiny $y$};
\node[right] at (2.5,-1) {\tiny $w$};
\node[left] at (-2.5,-1) {\tiny $R_1(x,y)*x$};
\node[below] at (0,-2.5) {\tiny $z$};
\end{tikzpicture}
\vspace{.2in}
		\caption{Diagram for $5^k_1$.}
		\label{5k1}
\end{figure}

\begin{figure}[h]
\begin{tikzpicture}[use Hobby shortcut]
\begin{knot}[thick,
consider self intersections=true,
  ignore endpoint intersections=false,
  flip crossing/.list={7,8,4,10}
]
\strand ([closed]0,2)[decoration={markings,mark=at position .5 with
    {\arrow[scale=3,>=stealth]{>}}},postaction={decorate}]..(-.5,1.75)..(0,0)..(.5,-.2)..(2,-2)..(-.5,-1)..(-2,1.5)..(0,1.5)..(2,1.5)..(.5,-1)..(-2,-2)..(-.5,-.2)..(0,0)..(.5,1.75)..(0,2);
\end{knot}
\node[circle,draw=black, fill=black, inner sep=0pt,minimum size=8pt] (a) at (0,0) {};
\node[right] at (.35,0) {\tiny $R_1(x,y)$};
\node[right] at (.7,.9) {\tiny $R_2(x,y)$};
\node[left] at (-.5,0) {\tiny $x$};
\node[left] at (-.7,1) {\tiny $y$};
\node[left] at (-2.4,1) {\tiny $w$};
\node[right] at (2.1,-1.2) {\tiny $R_1(x,y)*z$};
\node[right] at (2.4,1) {\tiny $z$};
\end{tikzpicture}
\vspace{.2in}
		\caption{Diagram for $5^k_8$.}
		\label{5k8}
\end{figure}
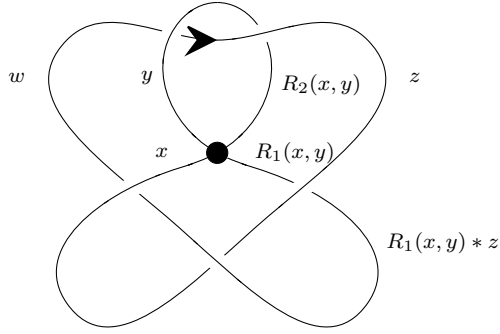

The coloring of $5^k_1$ by the above signquandle gives the set of equations

\[
\begin{cases}
                         x=w\bar{*}R_2(x,y) = -w+2x\\
                          y=z\bar{*}(R_1(x,y)*x) = 2y-z\\
                     z=R_2(x,y) \bar{*} w = 3 + 2w-x\\
                          w=(R_1(x,y)*x)\bar{*}z=3-y+2z.
\end{cases}
\]

Coloring the singular knot $5^k_8$ by the above singquandle gives the set of equations
\[
\begin{cases}
                                       x=z*(R_1(x,y)*z)=2x+2y+3z\\
                          y=R_2(x,y) * z = 3-x + 2z\\
                    z=w*y = -w+2y\\
                          w=(R_1(x,y)*z)*x = 3+4x-y+4z.
\end{cases}
\]
These two systems give that the two singular knots have 6 colorings by this singquandle.  Now we compute the $2$-cocycle invariant using $\phi,\phi' : X \times X \rightarrow \mathbb{Z}_6$ as defined above. The singular knot $5^k_1$ has $\Phi_{\phi,\phi'}(5^k_1) 
= 6u^3$ and the second knot $5^k_8$ has $\Phi_{\phi,\phi'}(5^k_8) 
= 6u^0= 6$, thus distinct.
\end{example}

\begin{example}
In this example we show that the cocycle invariant detects the change in the singular crossing. Consider the oriented singquandle $(\mathbb{Z}_{6}, *, R_1, R_2)$ with cocycles $\phi(x,y) = 2x+3x^2 -2y-xy -2y^2$ and $\phi'(x,y) = 3+x+x^2+2y-xy$ from item (1) of Example \ref{ex1cocycles}. We will use the $2$-cocycle invariant to distinguish the following singular knots listed as $5^k_6$ and $5^k_7$ in \cite{Oyamaguchi}. 

\begin{figure}[h]
\begin{tikzpicture}[use Hobby shortcut]
\begin{knot}[thick,
consider self intersections=true,
  ignore endpoint intersections=false,
  flip crossing/.list={3,4,5,6,11,13}
]
\strand ([closed]0,2) [decoration={markings,mark=at position .5 with
    {\arrow[scale=3,>=stealth]{>}}},postaction={decorate}]..(-1,1.5)..(0,1)..(.5,1.5)..(2,0)..(0,-1.5)..(-2,0)[decoration={markings,mark=at position .93 with
    {\arrow[scale=3,>=stealth]{>}}},postaction={decorate}]..(-1,1.5)..(0,1)..(0,0)..(1,-1)..(0,-2)..(-1,-1)..(0,-.3)..(1,1)..(.4,1.8)..(0,2);
\end{knot}

\node[circle,draw=black, fill=black, inner sep=0pt,minimum size=8pt] (a) at (-1,1.5) {};
\node[above] at (-1.6,1.4) {\tiny $x$};
\node[above] at (0,2) {\tiny $y$};
\node[below] at (0,-2) {\tiny $w$};
\node[right] at (.7,-.4) {\tiny $z$};
\end{tikzpicture}
\vspace{.2in}
		\caption{Diagram for $5^k_6$.}
		\label{5k6}
\end{figure}
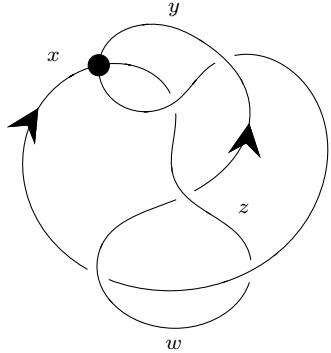

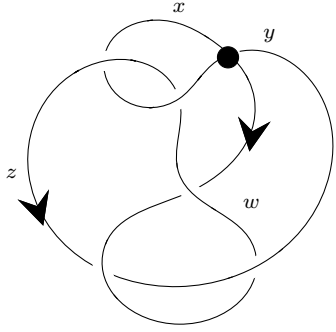
\begin{figure}[h]

\begin{tikzpicture}[use Hobby shortcut]
\begin{knot}[thick,
consider self intersections=true,
  ignore endpoint intersections=false,
  flip crossing/.list={3,4,5,6,11,13}
]
\strand ([closed]0,2)[decoration={markings,mark=at position .45 with
    {\arrow[scale=3,>=stealth]{<}}},postaction={decorate}]..(-1,1.5)..(0,1)..(.5,1.5)..(2,0)..(0,-1.5)..(-2,0)[decoration={markings,mark=at position .93 with
    {\arrow[scale=3,>=stealth]{<}}},postaction={decorate}]..(-1,1.5)..(0,1)..(0,0)..(1,-1)..(0,-2)..(-1,-1)..(0,-.3)..(1,1)..(.4,1.8)..(0,2);
\end{knot}

\node[circle,draw=black, fill=black, inner sep=0pt,minimum size=8pt] (a) at (.65,1.55) {};
\node[above] at (0,2) {\tiny $x$};
\node[above] at (1.2,1.6) {\tiny $y$};
\node[right] at (.7,-.4) {\tiny $w$};
\node[left] at (-2,0) {\tiny $z$};
\end{tikzpicture}
\vspace{.2in}
		\caption{Diagram for $5^k_7$.}
		\label{5k7}
\end{figure}

Coloring of $5^k_6$ by the above signquandle gives the set of equations
\[
\begin{cases}
                         x=(R_1(x,y) \bar{*}y)*w = 3+2w + 2x+3y\\
                          y=w*z= -w+2z\\
                     z=R_2(x,y)*R_1(x,y)=3+3x-2y\\
                          w=z*(R_1(x,y)\bar{*}y)= 2x-z.
\end{cases}
\]

Coloring the singular knot $5^k_7$ by the above singquandle gives the set of equations
\[
\begin{cases}
                                       x=R_1(x,y)\bar{*}z= 3-2x+y+2z\\
                          y=z*(R_2(x,y)*w)= 4w-2x-z\\
                    z=w*R_1(x,y)=-w+4x-2y\\
                          w=(R_2(x,y)*w)*y=3-2w+x+2y.
\end{cases}
\]
These two systems give that the two singular knots have 6 colorings by this singuqndle.  Now we compute the $2$-cocycle invariant using $\phi,\phi': X \times X \rightarrow \mathbb{Z}_6$ as defined above. The singular knot $5^k_6$ has $\Phi_{\phi,\phi'}(5^k_6) 
= 6u^3$ and the second knot $5^k_7$ has $\Phi_{\phi,\phi'}(5^k_7) 
= 6u^0= 6$, thus distinct.
\end{example}

\begin{example}  Consider the oriented singquandle $(\mathbb{Z}_{6}, *, R_1, R_2)$ with cocycles $\phi(x,y)= y(x+1)$ and $\phi'(x,y) = 1 + x + xy$ from item (2) of Example \ref{ex1cocycles}.  Now we color the two singular links $K_1$ and $K_2$ by the oriented singquandle $(\mathbb{Z}_{6}, *, R_1, R_2)$ as shown in the figure below.  
\begin{figure}[h]
\begin{tikzpicture}[use Hobby shortcut]
\begin{knot}[thick,
  flip crossing=4,
]
\strand[decoration={markings,mark=at position .5 with
    {\arrow[scale=3,>=stealth]{>}}},postaction={decorate}] (2.5,0) circle[radius=2cm];
\strand[decoration={markings,mark=at position .5 with
    {\arrow[scale=3,>=stealth]{>}}},postaction={decorate}] (0,0) circle[radius=2cm];
\strand[decoration={markings,mark=at position .5 with
    {\arrow[scale=3,>=stealth]{>}}},postaction={decorate}] (-2.5,0) circle[radius=2cm];
\end{knot}
\node[circle,draw=black, fill=black, inner sep=0pt,minimum size=8pt] (a) at (1.24,1.55) {};
\node[circle,draw=black, fill=black, inner sep=0pt,minimum size=8pt] (a) at (-1.24,1.55) {};
\node[above] at (0,2) {\tiny $R_1(x,y)$};
\node[left] at (.9,1.4) {\tiny $R_2(x,y)$};
\node[left] at (2,2.1) {\tiny $x$};
\node[right] at (2,1) {\tiny $y$};
\node[left] at (-4.4,1) {\tiny $w$};
\node[left] at (-1.6,1.3) {\tiny $R_2(R_1(x,y),w)$};
\node[below] at (0,-2) {\tiny $z$};
\end{tikzpicture}
\vspace{.2in}
		\caption{Diagram $K_1$.}
		\label{consumK1}
\end{figure}
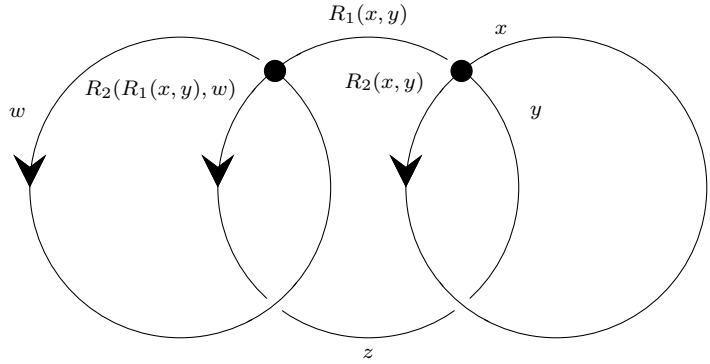

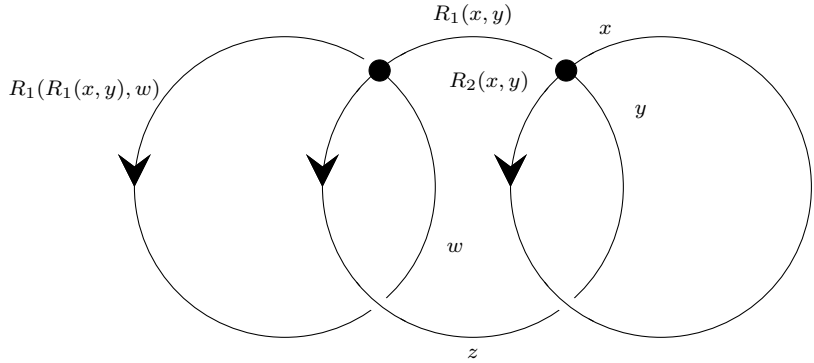
\begin{figure}[h]
\begin{tikzpicture}[use Hobby shortcut]
\begin{knot}[thick,
]

\strand[decoration={markings,mark=at position .5 with
    {\arrow[scale=3,>=stealth]{>}}},postaction={decorate}] (2.5,0) circle[radius=2cm];
\strand[decoration={markings,mark=at position .5 with
    {\arrow[scale=3,>=stealth]{>}}},postaction={decorate}] (0,0) circle[radius=2cm];
\strand[decoration={markings,mark=at position .5 with
    {\arrow[scale=3,>=stealth]{>}}},postaction={decorate}](-2.5,0) circle[radius=2cm];

\end{knot}
\node[circle,draw=black, fill=black, inner sep=0pt,minimum size=8pt] (a) at (1.24,1.55) {};
\node[circle,draw=black, fill=black, inner sep=0pt,minimum size=8pt] (a) at (-1.24,1.55) {};
\node[above] at (0,2) {\tiny $R_1(x,y)$};
\node[left] at (.9,1.4) {\tiny $R_2(x,y)$};
\node[left] at (2,2.1) {\tiny $x$};
\node[right] at (2,1) {\tiny $y$};
\node[right] at (-0.5,-.8) {\tiny $w$};
\node[below] at (0,-2) {\tiny $z$};
\node[left] at (-4,1.3) {\tiny $R_1(R_1(x,y),w)$};
\end{tikzpicture}
\vspace{.2in}
		\caption{Diagram $K_2$.}
		\label{consumK2}
\end{figure}
The coloring of the first singular link $K_1$ by this singquandle gives the set of equations 
\[
\begin{cases}
                         x=R_2(x,y)=3+3x+3x^2+y\\
                          y=z*R_2(x,y)=2y-z\\
                     z=R_2(R_1(x,y),w)\bar{*}w=3+w+3x+3x^2\\
                          w=R_1(R_1(x,y),w)=x.
\end{cases}
\]

Coloring the second singular link $K_2$ by this singquandle gives the set of equations 
\[
\begin{cases}
                                       x=R_2(x,y)=3+3x+3x^2+y\\
                          y=z*R_2(x,y)=2y+5z\\
                    z=R_2(R_1(x,y),w)=3+w+3x+3x^2\\
                          w=R_1(R_1(x,y),w)*z=-x+2z.
\end{cases}
\]
These two systems give that the two singular links $K_1$ and $K_2$ have 6 colorings by this singquandle.  We now compute the $2$-cocycle invariant using $\phi,\phi': X \times X \rightarrow \mathbb{Z}_2$ as defined above. The first singular link $K_1$ has $\Phi_{\phi,\phi'}(K_1) 
= 6u$ and the second link $K_2$ has $\Phi_{\phi,\phi'}(K_2) 
= 6u^0= 6$.  Thus the two links are distinct.
\end{example}

\begin{example}  Consider the oriented singquandle $(\mathbb{Z}_{10}, *, R_1, R_2)$ with cocycles $\phi(x,y) = (x-y)(y-x)$ and $\phi'(x,y) = 1+2x+3x^2+y^2$ from item (3) of Example \ref{ex1cocycles}.  Now we color the two singular knots $K_1$ and $K_2$ by the oriented singquandle $(\mathbb{Z}_{10}, *, R_1, R_2)$ as shown in the figure below.  

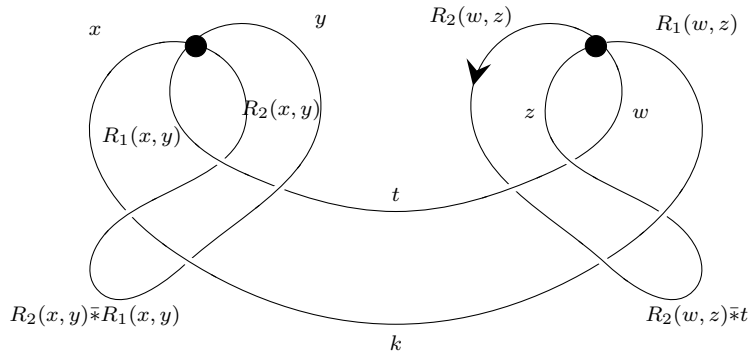
\begin{figure}[h]

\begin{tikzpicture}[use Hobby shortcut]
\begin{knot}[
consider self intersections=true,
  ignore endpoint intersections=false,
  flip crossing/.list={4,7,9,12}
]
\strand ([closed]-4,1)[decoration={markings,mark=at position .52 with
    {\arrow[scale=3,>=stealth]{>}}},postaction={decorate}]..(-2,1)..(-4,-1)..(-4,-1.5)..(-2.8,-1.2)..(-1,1)..(-2,2)..(-3,1)..(-2,0)..(0,-.5)..(2,0)..(3,1)..(2,2)..(1,1).. (2.8,-1.2)..(4,-1.5)..(4,-1)..(2,1)..(4,1)..(3,-1).. (0,-2)..(-3,-1)..(-4,1);
\end{knot}
\node[circle,draw=black, fill=black, inner sep=0pt,minimum size=8pt] (a) at (-2.659,1.7) {};
\node[circle,draw=black, fill=black, inner sep=0pt,minimum size=8pt] (a) at (2.659,1.7) {};
\node[above] at (-4,1.7) {\tiny $x$};
\node[above] at (-1,1.8) {\tiny $y$};
\node[left] at (-2.7,.5) {\tiny $R_1(x,y)$};
\node[right] at (-2.2,.85) {\tiny $R_2(x,y)$};
\node[above] at (0,-.5) {\tiny $t$};
\node[below] at (0,-2) {\tiny $k$};
\node[right] at (3,.8) {\tiny $w$};
\node[left] at (2,.8) {\tiny $z$};
\node[above] at (4,1.7) {\tiny $R_1(w,z)$};
\node[above] at (1,1.8) {\tiny $R_2(w,z)$};
\node[below] at (-4,-1.6) {\tiny $R_2(x,y) \bar{*} R_1(x,y)$};
\node[below] at (4,-1.6) {\tiny $R_2(w,z)\bar{*}t$};
\end{tikzpicture}
\vspace{.2in}
		\caption{Diagram $K_3$.}
		\label{consumK11}
\end{figure}

The coloring the first singular knot $K_3$ by this singquandle gives the set of equations 
\[
\begin{cases}
                        x=k \bar{*} (R_2(x,y) \bar{*} R_1(x,y)) = 7k+6x+8y\\
                        y=(R_2(x,y) \bar{*} R_1(x,y))\bar{*} k = 4k+3x+5x^2+9y\\
                        t= R_1(x,y) \bar{*}y = 7x+4y\\
                        z = (R_2(w,z) \bar{*} t)\bar{*} R_1(w,z) = 8t+9w+5w^2+9z\\
                        w= t \bar{*}z=7t+4z\\
                        k = R_1(w,z) \bar{*} (R_2(w,z) \bar{*} t)= 6t+7w+8z.
\end{cases}
\]

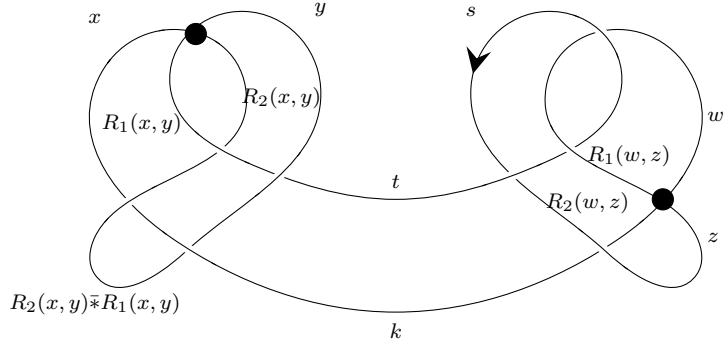
\begin{figure}[h]

\begin{tikzpicture}[use Hobby shortcut]
\begin{knot}[
consider self intersections=true,
  ignore endpoint intersections=false,
  flip crossing/.list={4,7,9,12}
]
\strand ([closed]-4,1)[decoration={markings,mark=at position .52 with
    {\arrow[scale=3,>=stealth]{>}}},postaction={decorate}]..(-2,1)..(-4,-1)..(-4,-1.5)..(-2.8,-1.2)..(-1,1)..(-2,2)..(-3,1)..(-2,0)..(0,-.5)..(2,0)..(3,1)..(2,2)..(1,1).. (2.8,-1.2)..(4,-1.5)..(4,-1)..(2,1)..(4,1)..(3,-1).. (0,-2)..(-3,-1)..(-4,1);
\end{knot}
\node[circle,draw=black, fill=black, inner sep=0pt,minimum size=8pt] (a) at (-2.659,1.7) {};
\node[circle,draw=black, fill=black, inner sep=0pt,minimum size=8pt] (a) at (3.55,-.5) {};
\node[above] at (-4,1.7) {\tiny $x$};
\node[above] at (-1,1.8) {\tiny $y$};
\node[left] at (-2.7,.5) {\tiny $R_1(x,y)$};
\node[right] at (-2.2,.85) {\tiny $R_2(x,y)$};
\node[above] at (0,-.5) {\tiny $t$};
\node[below] at (0,-2) {\tiny $k$};
\node[right] at (4,.6) {\tiny $w$};
\node[right] at (4,-1) {\tiny $z$};
\node[above] at (1,1.8) {\tiny $s$};
\node[below] at (-4,-1.6) {\tiny $R_2(x,y) \bar{*} R_1(x,y)$};
\node[left] at (3.25,-.55) {\tiny $R_2(w,z)$};
\node[above] at (3.1,-.2) {\tiny $R_1(w,z)$};
\end{tikzpicture}
\vspace{.2in}
		\caption{Diagram $K_4$.}
		\label{consumK22}
\end{figure}

Coloring the second singular knot $K_4$ by this singquandle gives the set of equations 
\[
\begin{cases}
                        x=k \bar{*} (R_2(x,y) \bar{*} R_1(x,y)) = 7k + 6x + 8y\\
                        y=(R_2(x,y) \bar{*} R_1(x,y))\bar{*} k = 4k+3x+5x^2+9y\\
                        t= R_1(x,y) \bar{*}y = 7x+4y\\
                        w= R_1(w,z) \bar{*} s= 4s+7w\\
                        z = s \bar{*} t=7s+4t\\
                        s= t \bar{*} R_1(w,z)=7t+4w\\
                        k = R_2(w,z) \bar{*}z = 5w+5w^2 +z.
\end{cases}
\]
These two systems give that the two singular knots $K_3$ and $K_4$ have 40 colorings by this singquandle.  We now compute the $2$-cocycle invariant using $\phi,\phi': X \times X \rightarrow \mathbb{Z}_4$ as defined above. The first singular knot $K_3$ has $\Phi_{\phi,\phi'}(K_3) 
= 10+10u+10u^2+10u^3$ and the second knot $K_4$ has $\Phi_{\phi,\phi'}(K_4) 
= 10u+20u^2+10u^3$.
\end{example}

\begin{example}
In this example we use the $2$-cocycle invariant to distinguish the singular \emph{square} knot and \emph{granny} knot each with two singular crossings. Consider the oriented singquandle $(\mathbb{Z}_{10}, *, R_1, R_2)$ with cocycles $\phi(x,y) = 2(2+3x+3y)^3$ and $\phi'(x,y) = (1+x+y)^5$ from item (4) of Example \ref{ex1cocycles}.  Recall that $x*y = 3x -2y$, $x 
\bar{*}y = -3x+4y$, $R_1(x,y) =3x+8y$ and $R_2(x,y) = 4x+7y$.  

\begin{figure}[h]

\begin{tikzpicture}[use Hobby shortcut]
\begin{knot}[
consider self intersections=true,
  ignore endpoint intersections=false,
  flip crossing/.list={4,7,9,12,6,8}
]
\strand ([closed]-2.5,2.5) ..(-1,0)[decoration={markings,mark=at position .7 with
    {\arrow[scale=3,>=stealth]{>}}},postaction={decorate}]..(-4,-1)..(-2,1)..(0,1).. (2,1)..(4,-1)..(1,0)..(2.5,2.5)..(3.5,1)..(2,-1)..(0,-1)..(-2,-1)..(-3.5,1);

\end{knot}
\node[circle,draw=black, fill=black, inner sep=0pt,minimum size=8pt] (a) at (-3.4,.5) {};
\node[circle,draw=black, fill=black, inner sep=0pt,minimum size=8pt] (a) at (3.4,.5) {};
\node[above] at (-2.5,1) {\tiny $R_1(x,y)$};
\node[above] at (-2.5,2.6) {\tiny $R_2(x,y)$};
\node[left] at (-2.5,-.4) {\tiny $x$};
\node[left] at (-4,-1) {\tiny $y$};
\node[right] at (3,1) {\tiny $w$};
\node[above] at (2,.5) {\tiny $z$};
\node[above] at (2,-.5) {\tiny $R_1(z,w)$};
\node[right] at (4,-.5) {\tiny $R_2(z,w)$};
\node[rotate=50] at (.5,2) {\tiny $R_1(x,y) *R_2(x,y)$};
\end{tikzpicture}
\vspace{.2in}
		\caption{A diagram of the singular \emph{square} knot.}
		\label{square}
\end{figure}
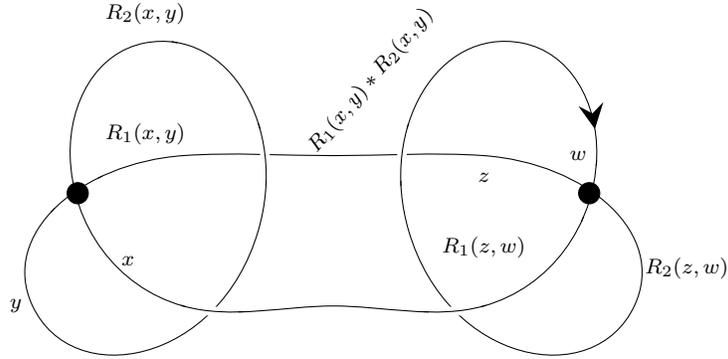

The coloring the singular square knot by the above singquandle gives the set of equations 
\[
\begin{cases}
                        x=R_1(z,w) =8w+3z\\
                        y=R_2(x,y) *x=y\\
                        z = (R_1(x,y)*R_2(x,y)) \bar{*} w =4w+7x\\
                        w=R_2(z,w) \bar{*} R_1(z,w)=w.
\end{cases}
\]
This system of equations has \emph{three} free variables giving that the number of colorings of the singular square knot is $10^3.$

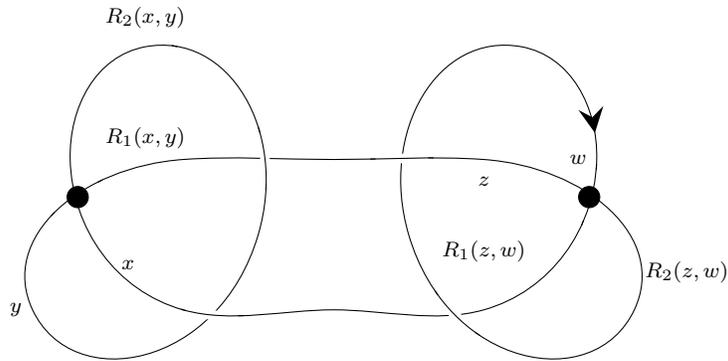
\begin{figure}[h]

\begin{tikzpicture}[use Hobby shortcut]
\begin{knot}[
consider self intersections=true,
  ignore endpoint intersections=false,
  flip crossing/.list={4,7,9,12}
]
\strand ([closed]-2.5,2.5) ..(-1,0)[decoration={markings,mark=at position .7 with
    {\arrow[scale=3,>=stealth]{>}}},postaction={decorate}]..(-4,-1)..(-2,1)..(0,1).. (2,1)..(4,-1)..(1,0)..(2.5,2.5)..(3.5,1)..(2,-1)..(0,-1)..(-2,-1)..(-3.5,1);

\end{knot}
\node[circle,draw=black, fill=black, inner sep=0pt,minimum size=8pt] (a) at (-3.4,.5) {};
\node[circle,draw=black, fill=black, inner sep=0pt,minimum size=8pt] (a) at (3.4,.5) {};
\node[above] at (-2.5,1) {\tiny $R_1(x,y)$};
\node[above] at (-2.5,2.6) {\tiny $R_2(x,y)$};
\node[left] at (-2.5,-.4) {\tiny $x$};
\node[left] at (-4,-1) {\tiny $y$};
\node[right] at (3,1) {\tiny $w$};
\node[above] at (2,.5) {\tiny $z$};
\node[above] at (2,-.5) {\tiny $R_1(z,w)$};
\node[right] at (4,-.5) {\tiny $R_2(z,w)$};

\end{tikzpicture}
\vspace{.2in}
		\caption{A diagram of the singular \emph{granny} knot.}
		\label{granny}
\end{figure}

Coloring the singular granny knot by the above singquandle gives the set of equations 
\[
\begin{cases}
                        x=R_1(z,w) * R_2(z,w)= z\\
                        y=R_2(x,y) * x =y\\
                        z=R_1(x,y) * R_2(x,y)= x\\
                        w = R_2(z,w) * (R_1(x,y) *R_2(x,y))= w.
\end{cases}
\]
This system of equations has also \emph{three} free variables giving that the number of colorings of the singular granny knot is $10^3.$
  We now compute the $2$-cocycle invariant using $\phi, \phi': X \times X \rightarrow \mathbb{Z}_4$ as defined above. The singular \emph{square} knot has $\Phi_{\phi,\phi'}
  = 378 + 250 u + 122 u^2 + 250 u^3$ and the singular \emph{granny} knot has $\Phi_{\phi,\phi'}
  = 370 + 250 u + 130 u^2 + 250 u^3$.  Thus this invariant distinguish them.
\end{example}

{\bf Acknowledgements}
The authors would like to thank Sam Nelson for helpful conversations regarding this article. Mohamed Elhamdadi was partially supported by Simons Foundation collaboration grant 712462.

\end{document}